\documentclass[11pt]{article} 

\title{\bfseries Large nearest neighbour balls\\ in hyperbolic stochastic geometry}

\author{Moritz Otto\thanks{otto@math.au.dk, Department of Mathematics, Aarhus University, Denmark}
\and
Christoph Th\"ale\thanks{christoph.thaele@rub.de, Faculty of Mathematics, Ruhr University Bochum, Germany}}

\usepackage{amssymb}
\usepackage{amsfonts}
\usepackage{amsmath}
\usepackage{amsthm}
\usepackage{bm}
\usepackage{graphicx}
\usepackage{multicol}
\usepackage{multirow}
\usepackage{caption}
\usepackage{subcaption}
\usepackage[usenames, dvipsnames]{xcolor}
\usepackage{verbatim}
\usepackage{dsfont}
\usepackage{color}
\usepackage[numbers]{natbib}
\usepackage{relsize}
\usepackage{lmodern}
\usepackage{url}
\usepackage{MnSymbol}
\usepackage{dsfont}
\usepackage{tikz}
\usepackage{float}
\usepackage{multicol}
\usepackage{vwcol}
\usepackage{mathtools}
\usepackage[T1]{fontenc}
\usepackage[utf8]{inputenc}
\usepackage{enumitem}   


\usepackage[left=2.5cm, top=2.5cm,bottom=2.5cm,right=2.5cm]{geometry}


\theoremstyle{plain}
\newtheorem{theorem}{Theorem}
\newtheorem{lemma}[theorem]{Lemma}
\newtheorem{remark}[theorem]{Remark}
\newtheorem{proposition}[theorem]{Proposition}
\newtheorem{definition}[theorem]{Definition}
\newtheorem{corollary}[theorem]{Corollary}
\newtheorem{example}[theorem]{Example}
\newtheorem{fig}[theorem]{Figure}


\numberwithin{equation}{section}
\usepackage{caption}
\captionsetup{font={small}}
\captionsetup[table]{position=top}

\newcommand{\bthe}{\begin{theorem}}
	\newcommand{\ethe}{\end{theorem}}

\newcommand{\ben}{\begin{enumerate}}
	\newcommand{\een}{\end{enumerate}}

\newcommand{\bit}{\begin{itemize}}
	\newcommand{\eit}{\end{itemize}}

\newcommand{\beq}{\begin{equation}}
\newcommand{\eeq}{\end{equation}}

\newcommand{\ble}{\begin{lemma}}
	\newcommand{\ele}{\end{lemma}}

\newcommand{\bde}{\begin{definition}\rm}
	\newcommand{\ede}{\halmos\end{definition}}

\newcommand{\bco}{\begin{corollary}}
	\newcommand{\eco}{\end{corollary}}

\newcommand{\bpr}{\begin{proposition}}
	\newcommand{\epr}{\end{proposition}}

\newcommand{\brem}{\begin{remark}\rm}
	\newcommand{\erem}{\halmos\end{remark}}

\newcommand{\bproof}{\begin{proof}[Proof]}
	\newcommand{\eproof}{\end{proof}}

\newcommand{\bexam}{\begin{example}\rm}
	\newcommand{\eexam}{\halmos\end{example}}

\newcommand{\bexamwh}{\begin{example}\rm}
	\newcommand{\eexamwh}{\end{example}}

\newcommand{\bfi}{\begin{fig}}
	\newcommand{\efi}{\end{fig}}

\newcommand{\btab}{\begin{tab}}
	\newcommand{\etab}{\end{tab}}

\newcommand{\beao}{\begin{eqnarray*}}
	\newcommand{\eeao}{\end{eqnarray*}\noindent}

\newcommand{\beam}{\begin{eqnarray}}
\newcommand{\eeam}{\end{eqnarray}\noindent}

\newcommand{\ovr}{\begin{array}}
	\newcommand{\barr}{\begin{array}}
		\newcommand{\earr}{\end{array}}
	
	\newcommand{\bdis}{\begin{displaymath}}
	\newcommand{\edis}{\end{displaymath}\noindent}

	\DeclareMathOperator{\arcosh}{arcosh}

	\def\N{{\mathbb N}}
	
	\def\P{{\mathbb P}}
	\def\E{{\mathbb E}}
	\def\R{{\mathbb R}}

	\def\H{{\mathbb H}}
	
	\def\phi{\varphi}
	
	\def\cals_+{{\cals_+}}
	
	\def\calh{{\mathcal{H}}}

	\def\cals{{\mathcal{S}}}

	\def\1{\mathds{1}}

		\newcommand{\XX}{\mathbb{X}}

	\newcommand{\halmos}{\quad\hfill\mbox{$\Box$}}


\definecolor{plum}{cmyk}{0.50,1,0,0}
\definecolor{TealBlue}{cmyk}{0.86,0,0.34,0.02}
\definecolor{OliveGreen}{cmyk}{0.64,0,0.95,0.40}

\makeatletter
\let\@fnsymbol\@alph
\makeatother

\begin{document}

\date{}

\maketitle

\begin{abstract}
	\noindent  Consider a stationary Poisson process in a $d$-dimensional hyperbolic space. For $R>0$ define the point process $\xi_R^{(k)}$ of exceedance heights over a suitable threshold of the hyperbolic volumes of $k$th nearest neighbour balls centred around the points of the Poisson process within a hyperbolic ball of radius $R$ centred at a fixed point. The point process $\xi_R^{(k)}$ is compared to an inhomogeneous Poisson process on the real line with intensity function $e^{-u}$ and point process convergence in the Kantorovich-Rubinstein distance is shown. From this, a quantitative limit theorem for the hyperbolic maximum $k$th nearest neighbour ball with a limiting Gumbel distribution is derived.
	\bigskip
	\\
	{\bf Keywords}. {Geometric extreme value theory, hyperbolic stochastic geometry, nearest neighbour balls, Poisson process approximation.}\\
	{\bf MSC}. 52A55, 60D05, 60G55.
\end{abstract}


\section{Introduction}

The study of extreme values, or more generally processes of exceedance heights and associated order statistics, is a classical topic in probability theory. A systematic study of extreme values for random geometric systems is more recent and we refer, for example, to \citep{BonnetChenavier,CalkaChenavier,ChenavierHemsley,Chenavier} for particular results on the Poisson-Voronoi, -Delaunay or -line tessellation, to \cite{Jammalamadaka,Schremp} for distinguished results on random interpoint distances, and to \cite{BSY21,DST16,PianoforteSchulte,PianoforteSchulte2,SchulteThaele12,SchulteThaele16} for general approaches leading to various other stochastic-geometric applications. A systematic study of random processes of exceedance heights in stochastic geometry is the content of \cite{BSY21,DST16,Otto}. 

In this paper we are interested in quantitative limit theorems for so-called large $k$th nearest neighbour balls, another classical stochastic geometry model whose investigation goes back to \cite{H82} and which has recently been studied in \cite{BSY21,CHO21,GHW19}. In particular, nearest neighbour balls (that is, $k$th nearest neighbour balls with $k=1$) can be regarded as spatial analogues of the concept of spacings in dimension one. In its simplest form the model can be described as follows: Take a sequence $(X_i)_{i\in\N}$ of independent random points which are uniformly distributed on the $d$-dimensional unit cube $[0,1]^d\subset\R^d$. For $n\geq 1$, $k\in\{1,\ldots,n\}$ and $i\in\{1,\ldots,n\}$ let {$r(i,n)$} be the distance of $X_i$ to its $k$th nearest neighbour among the points $X_1,\ldots,X_n$, where the distance is { understood in the Euclidean sense}. Then for $t\in\R$ define the random variable
$$
C_n := \sum_{i=1}^n\mathds{1}\Big\{\calh_e^d(B_e(X_i,r(i,n)))\geq {t+\log n\over n}\Big\},
$$
where $\calh_e^d$ stands for the $d$-dimensional Hausdorff measure and $B_e(X_i,r(i,n))$ for the $d$-dimensional ball of radius $r(i,n)$ centred at $X_i$ with respect to the Euclidean structure on $\R^d$. In other words, $C_n$ counts the number of exceedances of volumes of {$k$th} nearest neighbour balls that are larger than the threshold $(t+\log n)/n$. It follows from the results in \cite{BSY21,CHO21,GHW19} that, after suitable normalization, $C_n$ converges in distribution, as $n\to\infty$, to a Poisson random variable with mean $e^{-t}$. Moreover, the rate of convergence, measured in the total variation distance, is of order $(\log\log n)/\log n$. This result immediately leads to a limit theorem for the maximum volume $M_n:=\max\{\calh_e^d(B_e(X_i,r(i,n))):1\leq i\leq n\}$ of the $k$th nearest neighbour balls, which says that $nM_n$ converges, after suitable centring, to a Gumbel distribution, as $n\to\infty$. A similar result holds if the sample of $n$ independent random points is replaced by a homogeneous Poisson process in $[0,1]^d$ with intensity $n$.

While the results and references just mentioned deal with $k$th nearest neighbour balls in a $d$-dimensional \textit{Euclidean} space, we follow another line of current research in stochastic geometry and introduce and study a similar model in a $d$-dimensional \textit{hyperbolic} space $\H^d$ of constant negative curvature $-1$. Random geometric systems in such a non-Euclidean set-up have so far been studied in the context of random polytopes \cite{BesauRosenThaele,BesauThaele,GodlandKabluchkoThaele}, random graphs \cite{BodeEtAl,FountoulakisMuller,FountoulakisVanDenHornEtAl,OwadaYogesh} and tessellations \cite{GodlandKabluchkoThaele,Isokawa}. However, the study of extreme values in hyperbolic stochastic geometry has so far left no trace in the existing literature. The present paper can be understood as a first attempt in this direction. Moreover, since the results for $k$th nearest neighbour balls in Euclidean space have found applications in goodness-of-fit testing for point processes \cite{H82,H83}, our contributions can also be of interest for similar studies in hyperbolic space.

In principle it is possible to rephrase the Euclidean model of large $k$th nearest neighbour balls in a hyperbolic space, where the cube (which does not exist in hyperbolic geometry) is replaced by a hyperbolic ball of radius one, say. However, in this case, one can localize the problem and work with approximations in the corresponding tangent spaces. Within these tangent spaces the model is Euclidean again and we get back a result similar to that in \cite{BSY21,CHO21,GHW19}. For this reason, we modify the set-up as follows: We start with a stationary Poisson process in $\H^d$ and look at large $k$th nearest neighbour balls associated with points in a family of hyperbolic balls of radius $R\to\infty$. Up to a rescaling, in a Euclidean space this set-up is the same as fixing the radius of the ball and increasing the intensity of the Poisson process (or equivalently the number of points). However, this is no more the case in a hyperbolic space. Even more, since the problem in this form cannot locally be approximated by Euclidean models in tangent spaces, we will arrive at results which are of a different nature and `feel' the negative curvature of the underlying space. 

In the next section we formally describe the framework we work with and present our results.

\section{Set-up and results}

Fix a dimension parameter $d\in\N$ and consider a $d$-dimensional hyperbolic space $\H^d$ together with the intrinsic (Riemannian) metric $d_h$ and the corresponding $d$-dimensional Hausdorff measure $\calh^d$. Although all our results are independent of a concrete model for $\H^d$ (as can be seen from the fact that non of our arguments or computations rely on a specific model), for concreteness one may consider the Beltrami-Klein model in which $\H^d$ is identified with the open Euclidean unit ball $\mathbb{B}^d$ and the Riemannian metric is given by
$$
{\rm d}s^2 = {{\rm d}x_1^2+\ldots+{\rm d} x_d^2\over 1-x_1^2-\ldots-x_d^2}+{(x_1{\rm d} x_1+\ldots +x_d{\rm d} x_d)^2\over(1-x_1^2-\ldots-x_d^2)^2},
$$
see \cite{CannonEtAlHyperbolic,R2019} for details and further models for $\H^d$. While in this model hyperbolic hyperplanes are non-empty intersections of Euclidean hyperplanes with $\mathbb{B}^d$, hyperbolic balls are represented by Euclidean ellipsoids, see Figure \ref{fig:bild}. For $z \in \mathbb{H}^d$ and $r>0$ let $B(z,r):=\{x \in \mathbb{H}^d:\,d_h(x,z)\le r\}$ denote the closed hyperbolic ball of radius $r$ centred at $z$. We abbreviate $B_r:=B(p,r)$, where $p \in \mathbb{H}^d$ is some arbitrary fixed point, referred to as the origin of $\H^d$. 

\begin{figure}[t]
	\centering
	\vspace{-2cm}
	\includegraphics[width=0.8\columnwidth]{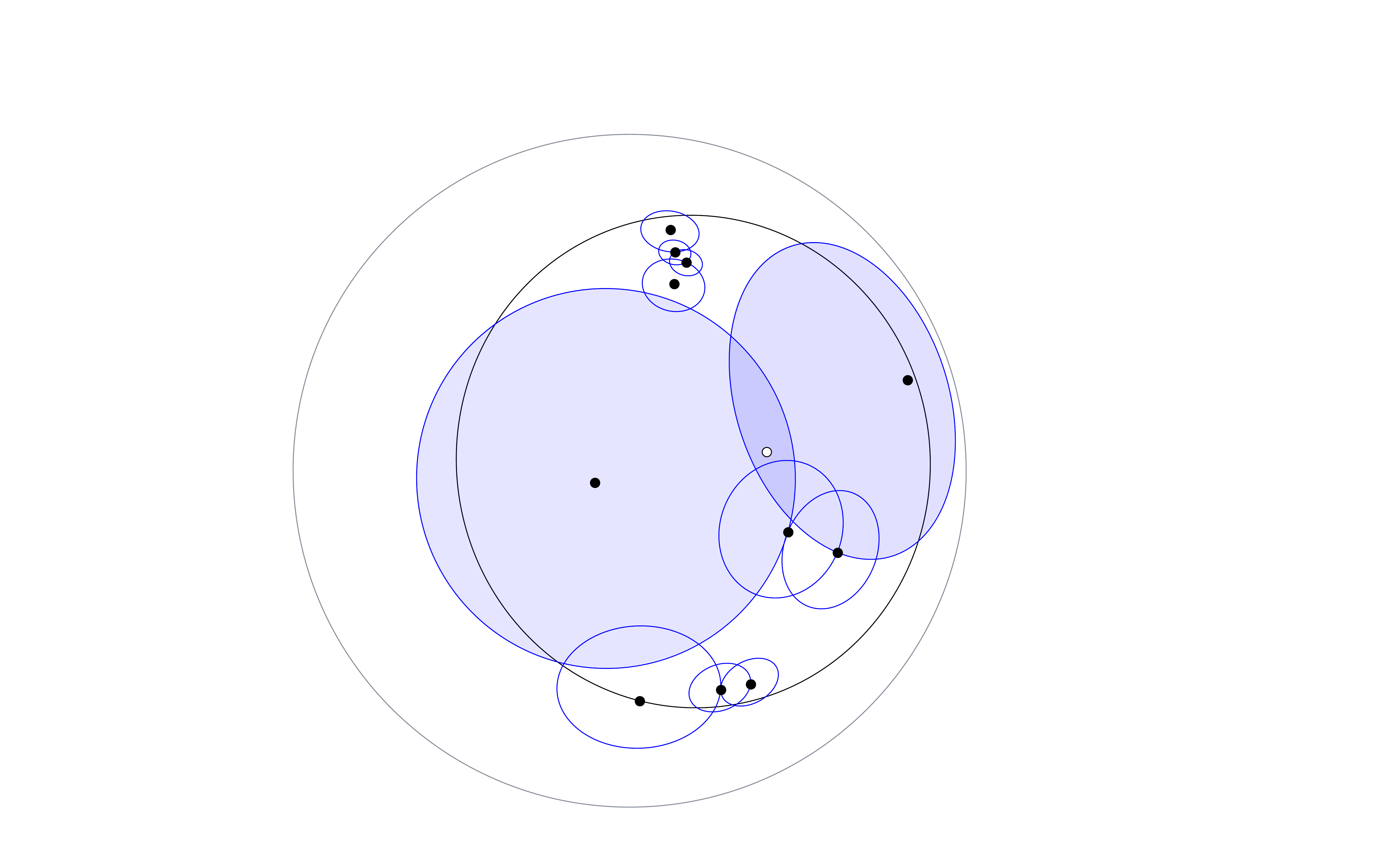}
	\caption{Construction of nearest neighbour balls ($k=1$) in the Beltrami-Klein model for the hyperbolic plane. The ball $B_R$ is shown in black with a white centre, the black points are points from $\eta$ together with their nearest neighbour balls. The area of the two blue balls exceed the value $v_1(R)$.}
	\label{fig:bild}
\end{figure}

Let $\eta$ be a Poisson process in $\mathbb{H}^d$, $d\geq 2$, with intensity measure $\mathcal{H}^d$. We note that $\eta$ is stationary in the sense that its distribution is invariant under all isometries of the hyperbolic space. For $R >0$ and $k\in\N$ let 
\begin{align}\label{eq:vkr}
	v_k(R) := R\,(d-1)+{(k-1)}\log (R\,(d-1))-\log \left(\frac{{(k-1)}!2^{d-1}(d-1)}{\omega_d}\right),
\end{align}
where $\omega_d={2\pi^{d/2}\over\Gamma({d\over 2})}$ is the surface area of the $(d-1)$-dimensional Euclidean unit sphere. For $x\in \mathbb{H}^d$, $k\in\N$ and a general, locally finite and simple counting measure $\mu$ on $\mathbb{H}^d$ let $r(x,k,\mu)$ denote the hyperbolic distance to the $k$th nearest neighbour of $x$ in $\mu$. In the focus of our results is the point process
\begin{align}\label{eq:PPExceedances}
	\xi_{R}^{(k)}:=\sum_{x \in \eta \cap B_R} \delta_{\mathcal{H}^d(B_{r(x,k,\eta-\delta_x)})-v_k(R)},\qquad R>0,
\end{align}
on the real line $\R$. This process describes the exceedance heights over the threshold $v_k(R)$ of the volumes of $k$th nearest neighbour balls centred around the points of $\eta$ within a ball $B_R$ of radius $R$, see Figure \ref{fig:bild}. {We give a brief heuristic argument which explains at least for $k=1$ that $v_k(R)$ given in \eqref{eq:vkr} is the correct threshold to expect Poisson approximation for $\xi_R^{(k)}$. Given some point of the Poisson process $\eta$, the probability that the hyperbolic ball of volume $v_1(R)+c$ around this point does not contain any further points of $\eta$ is $\exp(-v_1(R)+c)$. Assuming that all such balls around all points of $\eta \cap B_R$ behave asymptotically independently (which is, of course, not true and requires justification), the probability $\mathbb{P}[\mathcal{E}(R)]$ of the event $\mathcal{E}(R)$ that in $\eta \cap B_R$ there is no 1-nearest neighbour ball of radius larger than $v_1(R)+c$ should be approximately equal to $\exp(-\mathcal{H}^d(B_R) \exp(-v_1(R)-c))$, where $\mathcal{H}^d(B_R)$ is for large $R$ approximately the number of points of $\eta \cap B_R$. Using that $\mathcal{H}^d(B_R) \exp(-v_1(R))\to 1$ as $R \to \infty$ (see \eqref{volbou}) we obtain from the classical Poisson limit theorem that $\mathbb{P}[\mathcal{E}(R)]$ converges to the Gumbel limit $\exp(-e^{-c})$ as $R\to \infty$.}

Our main results quantify on the positive real half-axis $\mathbb{R}_+$ the approximation of $\xi_{R}^{(k)}$ by a suitable Poisson process, where we distinguish the cases $k=1$ and $k\geq 2$. The distance is thereby measured by means of the so-called Kantorovich-Rubinstein distance which for two finite simple counting measures $\mu_1$ and $\mu_2$ on $\H^d$ is given by
$$
\mathbf{d_{KR}}(\mu_1,\mu_2) := \sup_{h}\big(\E[h(\mu_1)]-\E[h(\mu_2)]\big),
$$
where the supremum is taken over all measurable $1$-Lipschitz functions with respect to the total variation distance on the space of finite simple counting measures on $\H^d$.

\begin{theorem}\label{Th1}
	Let $\zeta$ be an inhomogeneous Poisson process on ${\mathbb{R}}$ with intensity measure $\mathbb{E}\zeta$ given by $\mathbb{E} \zeta ((u,\infty))=e^{-u},\, {u\in \mathbb{R}}$.  Let $ {c\in \mathbb{R}}$.
	
	\begin{itemize}
		\item[(i)] Suppose that $k=1$. Then there are constants $C_{1,d},R_{1,d}>0$ only depending on $d$ and $c$ such that for all $R\ge R_{1,d}$,
		\begin{align*}
			\mathbf{d_{KR}}(\xi_R^{(1)} \cap (c,\infty),\zeta \cap (c,\infty)) \le \begin{cases}
			C_{1,d} \,Re^{-R(d-1)/2} &: d\leq 5\\
			C_{1,d} \,e^{-2R} &: d\geq 6.
			\end{cases}
		\end{align*}
		
		\item[(ii)] Suppose that $k\geq 2$. Then there are constants $C_{k,d},R_{k,d}>0$ only depending on $d$, $k$ and on $c$ such that for all $R\geq R_{k,d}$,
		\begin{align*}
			\mathbf{d_{KR}}(\xi_R^{(k)} \cap (c,\infty),\zeta \cap (c,\infty)) \le {C_{k,d} {\log R \over R}}.
		\end{align*}
	\end{itemize}
	In particular, for any $k\in\N$ the point process $\xi_R^{(k)}$ converges in distribution to the Poisson process $\zeta$, as $R\to\infty$.
\end{theorem}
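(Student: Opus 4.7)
The plan is to apply a Poisson process approximation theorem in the Kantorovich--Rubinstein distance of the type developed in \cite{DST16,Otto}, exploiting the representation of $\xi_R^{(k)}$ as a sum indexed by the atoms of $\eta\cap B_R$ of a local exceedance indicator. Such a theorem bounds $\mathbf{d_{KR}}(\xi_R^{(k)}\cap(c,\infty),\zeta\cap(c,\infty))$ by the sum of (i) a first-moment term controlling the discrepancy between the intensity measure of $\xi_R^{(k)}$ restricted to $(c,\infty)$ and the target intensity $e^{-u}\,\mathrm{d}u$, and (ii) a pair-interaction term arising from the twofold Palm distribution of $\eta$, which measures how much the exceedance events at two different atoms of $\eta$ decouple.

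The first term is computed via the Mecke formula, stationarity of $\eta$, and the fact that the number of points of $\eta$ in a hyperbolic ball of volume $V$ is Poisson with mean $V$:
\begin{equation*}
\mathbb{E}\,\xi_R^{(k)}((u,\infty)) = \mathcal{H}^d(B_R)\sum_{j=0}^{k-1}\frac{(v_k(R)+u)^j}{j!}\,e^{-(v_k(R)+u)}.
\end{equation*}
Combined with the hyperbolic volume expansion $\mathcal{H}^d(B_r)=\frac{\omega_d}{2^{d-1}(d-1)}e^{r(d-1)}\bigl(1+O(e^{-2r})\bigr)$ and the explicit form of $v_k(R)$ in \eqref{eq:vkr}, the right-hand side equals $e^{-u}$ with a relative error of order $e^{-2R}$ when $k=1$ and of order $(\log R)/R$ when $k\ge 2$. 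The slower rate in the latter case is a direct consequence of the sub-leading $(k-1)\log(R(d-1))$ correction in $v_k(R)$, which is needed to counter-balance the polynomial factor $(v_k(R))^{k-1}/(k-1)!$ from the $j=k-1$ term.

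The pair-interaction term is, by the twofold Mecke formula, the integral over $(x,y)\in B_R^{\,2}$ of the joint conditional probability that the $k$th nearest neighbour balls centred at $x$ and $y$ both exceed the threshold, given that $x,y\in\eta$. For $k=1$ this reduces to an expression involving $\exp\bigl(-\mathcal{H}^d(B(x,r_\ast)\cup B(y,r_\ast))\bigr)$, where $r_\ast=r_\ast(R)$ is the radius satisfying $\mathcal{H}^d(B_{r_\ast})=v_1(R)+c$. I would split the $(x,y)$-domain according to whether $d_h(x,y)$ exceeds or falls below $2r_\ast$: in the far regime the two balls are disjoint, the joint probability factorises, and the contribution is controlled by the square of the first-moment; in the near regime one uses a sharp estimate of the intersection volume $\mathcal{H}^d(B(x,r_\ast)\cap B(y,r_\ast))$ as a function of $d_h(x,y)$, followed by polar integration around the origin $p$.

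The main obstacle is this near-regime analysis. In hyperbolic space both the reference volume $\mathcal{H}^d(B_R)\sim e^{R(d-1)}$ and the overlap volume of two balls of radius $\sim R$ grow exponentially, so the cancellations between these competing exponentials must be tracked to leading order, relying on the thin-shell concentration of hyperbolic balls near their boundary. The dichotomy between the rates $R\,e^{-R(d-1)/2}$ for $d\le 5$ and $e^{-2R}$ for $d\ge 6$ reflects precisely this competition: in low dimensions the pair integral dominates and scales like a surface-area factor $e^{-R(d-1)/2}$ times a linear-in-$R$ penalty from the near-regime radial integration, whereas in high dimensions the $O(e^{-2R})$ error from the ball volume asymptotics in the first-moment analysis overtakes the pair integral. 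Once both contributions tend to zero, the claimed convergence in distribution $\xi_R^{(k)}\Rightarrow\zeta$ follows, as the Kantorovich--Rubinstein distance dominates weak convergence of simple counting measures restricted to $(c,\infty)$ for every $c\in\mathbb{R}$.
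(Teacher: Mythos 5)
Your overall route coincides with the paper's: a quantitative Poisson process approximation bound in the Kantorovich--Rubinstein distance (the paper uses \cite[Theorem 4.1]{BSY21}, a relative of the results in \cite{DST16,Otto}), an intensity computation via the Mecke equation that matches the paper's exactly, and a near/far dichotomy for the pair term when $k=1$; your explanation of the $d\le 5$ versus $d\ge 6$ case distinction is also essentially the paper's. The first genuine gap is that you never analyse the correlation term for $k\ge 2$, which is where the real work lies. For $k\ge 2$ two points $x,z$ at small hyperbolic distance can both have exceedingly large $k$th nearest neighbour balls without any ``empty overlap'' penalty: both exceedances are implied by $\eta(B(x,r_c))\le k-2$ and $\eta(B(z,r_c))\le k-2$, an event whose probability is of the same order $e^{-R(d-1)}$ (up to polynomial factors) as a single exceedance. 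Integrating this naively over $x\in B_R$ and over a near region of volume $\mathrm{poly}(R)$ gives a quantity that does not tend to zero at all, let alone at rate $(\log R)/R$. The paper extracts the missing decay by a two-scale split of the near region (a parameter $a$ and an auxiliary exponent $M$), by the Chen--Stein identity giving $\P[\eta(B_{r_c})\le k-2]/\P[\eta(B_{r_c})\le k-1]\le (k-1)/(v_k(R)+c)\sim 1/R$, and by the volume-difference lower bound $\mathcal{H}^d(B(z,r)\setminus B(x,r))\ge \alpha_1 s\,e^{(d-1)(r-s/2)}$ of Lemma \ref{voldif}(i). Nothing in your plan substitutes for this; your near-regime discussion is written only for $k=1$, where the forced separation $d_h(x,z)\ge r_c$ yields an overlap deficit of at least $\mathcal{H}^d(B_{r_c})/2$ and the estimate is comparatively easy.

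The second gap is structural: the two-term bound you invoke (intensity discrepancy plus a pair-interaction term) is not what the cited theorems deliver for scores depending on the whole configuration. The marks of $\xi_R^{(k)}$ are the exceedance heights $\mathcal{H}^d(B_{r(x,k,\eta-\delta_x)})-v_k(R)$, and on the exceedance event these are \emph{not} determined by $\eta$ inside any deterministic ball of volume $v_k(R)+c$; one must localize with stopping sets $\mathcal S(x,\eta)=B(x,r(x,k,\eta))$ and deterministic sets $S_x=B(x,r_{c'})$ at a second, growing level $c'>c$, and pay an additional error of order $e^{-c'}$ (the paper's $E_1$), balanced by the choice $c'=c+\log R$. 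Without this term your claimed decomposition is not a quotable theorem, and for $k\ge 2$ it contributes at the same order as the final rate, so it cannot be dismissed. Relatedly, your remark that in the far regime ``the contribution is controlled by the square of the first-moment'' is off: integrated over all far pairs, the product of marginals is of constant order; in the actual bound far pairs are excluded by the localization (they contribute nothing), while the square-of-marginals estimate is what controls the \emph{near} pairs (the paper's $E_2$).
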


\begin{remark}\rm 
	\begin{itemize}
	\item[(i)] 	{As a generalization of Theorem 1, one can prove that the marked point processes
		\begin{align*}
		\sum_{x \in \eta \cap B_R} \delta_{(x,\mathcal{H}^d(B_{r(x,k,\eta-\delta_x)})-v_k(R))},\qquad R>0,
		\end{align*}
restricted to some interval $(c,\infty)$ converge to a Poisson process on the product space $\mathbb H^d \times \mathbb{R}$, as $R\to \infty$.}  
		
		\item[(ii)] {The distinction between $k=1$ and $k\geq 2$ and the qualitatively different results in these cases reflect, in a sense, the growth of $v_k(R)-R(d-1)$. While $v_1(R)-R(d-1)$ is constant in $R>0$, we find that $v_k(R)-R(d-1)$ grows logarithmically in $R$ for $k\ge 2$.}
				
		\item[(iii)] We leave it as an open problem to decide whether (or not) the bounds in Theorem \ref{Th1} are optimal. However, we remark at this point that the bound in Theorem  \ref{Th1} for $k\geq 2$ are in accordance with the bounds of the analogous problem in a Euclidean space (see \cite[Theorem 6.4]{BSY21} and \cite[Theorem 1.2]{CHO21}). Indeed, note that by Lemma \ref{lem:LowerBoundVolumeBall} the statement of Theorem \ref{Th1}(ii) is equivalent to
			\begin{align*}
		\mathbf{d_{KR}}(\xi_R^{(k)} \cap (c,\infty),\zeta \cap (c,\infty)) &\le \tilde C_{k,d} \,{\log \log \mathcal H^d(B_R) \over \log \mathcal H^d(B_R)},\qquad R >R_{k,d},
	\end{align*} 
	with some positive constants $\tilde C_{k,d}$ depending on $c$, $k$ and $d$.

	\item[(iv)] If we replace $v_1(R)$ in \eqref{eq:vkr} by $\tilde{v}_1(R):=\log\mathcal{H}^d(B_R)$, then the bound in Theorem \ref{Th1}(i) can be improved to
	$$
				\mathbf{d_{KR}}(\xi_R^{(1)} \cap (c,\infty),\zeta \cap (c,\infty)) \le \tilde{C}_{1,d}\,Re^{-R(d-1)/2},\qquad R>\tilde{R}_{1,d},
	$$
	or equivalently,
	\begin{align*}
		\mathbf{d_{KR}}(\xi_R^{(1)} \cap (c,\infty),\zeta \cap (c,\infty)) \le \hat C_{1,d} \,{ \log \mathcal H^d(B_R) \over  \mathcal H^d(B_R)^{1/2}},\qquad R > \tilde R_{1,d},
	\end{align*}
	for \textit{all} $d\geq 2$, where $\tilde{C}_{1,d},\hat C_{1,d},\tilde R_{1,d}\in(0,\infty)$ are constants only depending on $c$ and $d$. In this form, the bound is in accordance with \cite{BSY21} if in the proof of Theorem 6.4 therein one systematically exploits that $k=1$\footnote{In fact, in \cite[Equation (6.12)]{BSY21} the integral term vanishes and the two remaining terms decay exponentially in $n$. The bounds $E_1\leq c_1n^{-1}$ and $E_2\leq c_2n^{-1}\log n$ remain unchanged, while $E_{3,1}=0$ and $E_{3,2}\leq c_3n^{-1/2}\log n$ in the terminology of \cite{BSY21}. This eventually yields a bound of order $n^{-1/2}\log n$ for the Kantorovich-Rubinstein distance.}. We note that the numerically much more tractable expression $v_1(R)$ in \eqref{eq:vkr} can be regarded as a first-order approximation of $\tilde{v}_1(R)$. It is the error in this approximation, which makes the case distinction in Theorem \ref{Th1}(i) unavoidable. We decided to work with $v_k(R)$ as given by \eqref{eq:vkr} in order to ease comparison with the Euclidean case.
	\end{itemize}
\end{remark}

The following extreme value statement for the distribution of the maximum volume of $k$th nearest neighbour balls in $\eta \cap B_R$ is a direct consequence of Theorem \ref{Th1}. It can be understood as the hyperbolic analogue to results for the asymptotic distribution of maximum $k$th nearest neighbour balls in Euclidean space, see \cite{BSY21}, Section 6.2, and \cite{CHO21} for general $k \in \mathbb{N}$ as well as \cite{GHW19} for an elementary proof in the special case $k=1$. 

\begin{corollary}
	Let $k\in\N$, {$c \in \mathbb{R}$} and denote by $C_{k,d},R_{k,d}>0$ the constants appearing in Theorem \ref{Th1}.
	\begin{itemize}
		\item[(i)] Suppose that $k=1$. Then for all $R\ge R_1$ we have that 
		\begin{align*}
			\Bigl|\P\Big(\max_{x \in \eta \cap B_R} \mathcal{H}^d(B_{r(x,1,\eta-\delta_x)})-v_1(R) \le c\Big)-\exp({-}e^{-c})\Bigl| \le \begin{cases}
				C_{1,d} \,Re^{-R(d-1)/2} &: d\leq 5\\
				C_{1,d} \,e^{-2R} &: d\geq 6.
			\end{cases}
		\end{align*}
		\item[(ii)] Suppose that $k\geq 2$. Then for all $R\geq R_k$ we have that 
		\begin{align*}
			\Bigl|\P\Big(\max_{x \in \eta \cap B_R} \mathcal{H}^d(B_{r(x,k,\eta-\delta_x)})-v_k(R) \le c\Big)-\exp({-}e^{-c})\Bigl| \le {C_{k,d} {\log R \over R}}.
		\end{align*}
	\end{itemize}	
	In particular, for any $k\in\N$ the random variable $\max_{x \in \eta \cap B_R} \mathcal{H}^d(B_{r(x,k,\eta-\delta_x)})-v_k(R)$ converges in distribution to a Gumbel distribution, as $R\to\infty$.
\end{corollary}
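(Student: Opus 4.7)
The corollary is a direct consequence of Theorem \ref{Th1}. First I would identify the maximum-event with the emptiness of an exceedance process: by the very definition of $\xi_R^{(k)}$ in \eqref{eq:PPExceedances},
\begin{align*}
\Bigl\{\max_{x \in \eta \cap B_R} \mathcal{H}^d(B_{r(x,k,\eta-\delta_x)})-v_k(R) \le c\Bigr\} = \bigl\{\xi_R^{(k)}((c,\infty))=0\bigr\}
\end{align*}
(with the usual convention that the maximum over an empty set equals $-\infty$). On the Poisson side, the void probability formula yields
\begin{align*}
\mathbb{P}\bigl(\zeta((c,\infty))=0\bigr) = \exp\bigl(-\mathbb{E}\zeta((c,\infty))\bigr) = \exp(-e^{-c}).
\end{align*}

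Next I would plug a \emph{single} test functional into the definition of $\mathbf{d_{KR}}$, namely $h(\mu):=\mathds{1}\{\mu=0\}$. This $h$ is $1$-Lipschitz with respect to the total variation distance on finite simple counting measures, since adding, removing, or relocating a single atom changes $h$ by at most $1$. Applied to the restricted processes $\xi_R^{(k)}\cap(c,\infty)$ and $\zeta\cap(c,\infty)$, the supremum defining $\mathbf{d_{KR}}$ gives
\begin{align*}
\Bigl|\mathbb{P}\bigl(\xi_R^{(k)}((c,\infty))=0\bigr)-\exp(-e^{-c})\Bigr| \le \mathbf{d_{KR}}\bigl(\xi_R^{(k)}\cap(c,\infty),\,\zeta\cap(c,\infty)\bigr).
\end{align*}
Combining this inequality with the rates supplied by Theorem \ref{Th1}(i) and (ii) yields the bounds asserted in parts (i) and (ii) of the corollary, respectively.

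Finally, for the Gumbel limit, since the right-hand side vanishes as $R\to\infty$ for every fixed $c\in\mathbb{R}$, the distribution functions of $\max_{x\in\eta\cap B_R}\mathcal{H}^d(B_{r(x,k,\eta-\delta_x)})-v_k(R)$ converge pointwise to the continuous Gumbel distribution function $c\mapsto\exp(-e^{-c})$, which is equivalent to convergence in distribution. The only step that even merits a line of justification is the $1$-Lipschitz property of $h$, which is immediate from the definition of the total variation distance on simple counting measures; I therefore anticipate no genuine obstacle in executing the plan.
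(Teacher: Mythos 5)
Your proposal is correct and follows essentially the same route as the paper: the paper likewise bounds $\bigl|\P(\xi_R^{(k)}\cap(c,\infty)=\emptyset)-\P(\zeta\cap(c,\infty)=\emptyset)\bigr|$ by $\mathbf{d_{KR}}(\xi_R^{(k)}\cap(c,\infty),\zeta\cap(c,\infty))$ and invokes Theorem \ref{Th1}, with the Poisson void probability giving $\exp(-e^{-c})$. Your additional details (the identification of the maximum event with the emptiness event and the $1$-Lipschitz test function $\mathds{1}\{\mu=0\}$) simply make explicit what the paper leaves implicit.
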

\begin{proof}
	In view of the inequality
	\begin{align*}
		|\P(\xi_R^{(k)} \cap (c,\infty)=\emptyset)-\P(\zeta\cap (c,\infty)=\emptyset)|\le\mathbf{d_{KR}}(\xi_R^{(k)}\cap (c,\infty),\zeta \cap (c,\infty) ),
	\end{align*}
	the claim follows directly from Theorem \ref{Th1}.
\end{proof}

The remaining parts of this paper are structured as follows. In Section \ref{sec:prelim} we provide some necessary background material from hyperbolic geometry and prove some auxiliary geometric estimates. We also rephrase there a general bound for quantitative Poisson approximation from \cite{BSY21} on which the proof of Theorem \ref{Th1} is based. The latter is provided in Section \ref{sec:Proof2}.

\section{Preliminaries}\label{sec:prelim}

\subsection{Hyperbolic geometry}

In this section we collect some preliminary materials from hyperbolic geometry, which are relevant in our context, and refer to the monograph \cite{R2019} and the survey article \cite{CannonEtAlHyperbolic} for further information. We recall that $B(z,r)$ stands for a $d$-dimensional geodesic ball of radius $r>0$ centred at $z\in\H^d$. If $z=p$ we simply write $B_r$ for $B(p,r)$. The volume of $B_r$ is given by
\begin{align}\label{eq:VolBall}
	\calh^d(B_r) = \omega_d\int_0^r\sinh^{d-1}(u)\,\mathrm{d}u,
\end{align}
where we recall that $\omega_d$ is the surface area of the $(d-1)$-dimensional Euclidean unit sphere, see \cite[Equation (3.26)]{R2019}. Identity \eqref{eq:VolBall} is a consequence of the polar integration formula in hyperbolic geometry \cite[pp. 123-125]{Chavel}, which says that
\begin{align}\label{eq:PolarIntegration}
	\int_{\H^d} f(x)\,\calh^d({\rm d}x) = \omega_d\int_{\mathbb{S}_p^{d-1}}\int_0^\infty \sinh^{d-1}(u)\,f(\exp_p(uv))\,{\rm d}u\sigma_p({\rm d}v),
\end{align}
where $\mathbb{S}_p^{d-1}$ is the $(d-1)$-dimensional unit sphere in the tangent space $T_p$ at $p$, $\sigma_p$ the normalized spherical Lebesgue measure on $\mathbb{S}_p^{d-1}\subset T_p$ and $\exp_p(uv)$ stands for the point in $\H^d$ arising by applying the exponential map $\exp_p:T_p\to\H^d$ to the point $uv\in T_p$. In particular, we have the following bounds for $\calh^d(B_r)$. 

\begin{lemma}\label{lem:LowerBoundVolumeBall}
	Let $d\geq 2$. Then there are constants $\Gamma_d,\gamma_d>0$ only depending on $d$ such that $\gamma_d e^{r(d-1)}\leq \calh^d(B_r)\leq\Gamma_d e^{r(d-1)}$ for all $r\geq 2$.
\end{lemma}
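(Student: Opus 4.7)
The plan is to combine the closed-form expression \eqref{eq:VolBall} with the elementary two-sided estimate
\[
\tfrac{1}{2}(1-e^{-2u})\,e^u \;\le\; \sinh u \;\le\; \tfrac{1}{2}\,e^u,\qquad u>0,
\]
which follows directly from $\sinh u=(e^u-e^{-u})/2$. Raising to the $(d-1)$-th power, the upper inequality gives $\sinh^{d-1} u\le 2^{-(d-1)}e^{u(d-1)}$ for all $u\ge 0$, so
\[
\mathcal H^d(B_r)\;\le\;\frac{\omega_d}{2^{d-1}}\int_0^r e^{u(d-1)}\,\mathrm du
\;\le\;\frac{\omega_d}{2^{d-1}(d-1)}\,e^{r(d-1)},
\]
which proves the upper bound with $\Gamma_d:=\omega_d/(2^{d-1}(d-1))$ (valid for every $r\ge 0$, in particular for $r\ge 2$).

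For the lower bound I would restrict the integration to $u\in[1,r]$, where the lower inequality above implies $\sinh^{d-1} u\ge \bigl((1-e^{-2})/2\bigr)^{d-1}e^{u(d-1)}$. Integrating,
\[
\mathcal H^d(B_r)\;\ge\;\omega_d\Bigl(\tfrac{1-e^{-2}}{2}\Bigr)^{d-1}\!\int_1^r e^{u(d-1)}\,\mathrm du
\;=\;\frac{\omega_d}{d-1}\Bigl(\tfrac{1-e^{-2}}{2}\Bigr)^{d-1}\bigl(e^{r(d-1)}-e^{d-1}\bigr).
\]
For $r\ge 2$ one has $e^{r(d-1)}-e^{d-1}=e^{r(d-1)}\bigl(1-e^{-(r-1)(d-1)}\bigr)\ge e^{r(d-1)}(1-e^{-(d-1)})\ge e^{r(d-1)}(1-e^{-1})$, so defining
\[
\gamma_d:=\frac{\omega_d(1-e^{-1})}{d-1}\Bigl(\frac{1-e^{-2}}{2}\Bigr)^{d-1}
\]
yields $\mathcal H^d(B_r)\ge \gamma_d\,e^{r(d-1)}$ for all $r\ge 2$.

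There is no real obstacle here: this is a two-line sandwiching of $\sinh$ by exponentials, followed by an elementary integral. The only mild point is that the lower bound requires $r$ bounded away from $0$ (hence the hypothesis $r\ge 2$), since $\mathcal H^d(B_r)\to 0$ as $r\to 0$ while $e^{r(d-1)}\to 1$, so no constant $\gamma_d>0$ can work uniformly on $[0,\infty)$.
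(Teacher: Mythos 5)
Your proposal is correct and follows essentially the same route as the paper: both start from the closed-form expression \eqref{eq:VolBall}, bound $\sinh$ above by $e^u/2$ for the upper estimate, and for the lower estimate restrict to $u\in[1,r]$ where $\sinh u$ is bounded below by a constant multiple of $e^u$ (you use $(1-e^{-2})/2$, the paper uses $1/3$), then use $r\ge 2$ to absorb the boundary term. The differences are only in the explicit constants $\gamma_d$, $\Gamma_d$, which are immaterial for the statement.
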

\begin{proof}
	It is elementary to check that $\sinh(u)\geq u$ for any $u\geq 0$ and that $\sinh(u)\geq e^u/3$ for $u\geq 1$. Since $d\geq 2$ we  conclude that
	\begin{align*}
		\calh^d(B_r) &\geq \omega_d\int_0^1 u^{d-1}\,\mathrm{d}u + {\omega_d\over 3^{d-1}}\int_1^r e^{(d-1)u}\,\mathrm{d}u \\
		&= {\omega_d\over d} + {\omega_d\over (d-1)3^{d-1}}(e^{r(d-1)}-e^{d-1}) \\
		&\geq {\omega_d\over (d-1)3^{d-1}}(e^{r(d-1)}-e^{d-1}) \\
		& \geq \gamma_d e^{r(d-1)}
	\end{align*}
	with the choice $\gamma_d={\omega_d\over 2(d-1)3^{d-1}}$, where for the last inequality we used that $r\geq 2$. On the other hand, $\sinh(u)\leq e^{u}/2$ for all $u\geq 0$ and we obtain
	\begin{align*}
		\calh^d(B_r) \leq {\omega_d\over 2^{d-1}}\int_0^r e^{(d-1)u}\,\mathrm{d}u \leq \Gamma_d e^{r(d-1)}
	\end{align*}
	with $\Gamma_d={\omega_d\over (d-1)2^{d-1}}$.
\end{proof}

The following lemma is an essential ingredient of the proof of Theorem \ref{Th1}. It provides a bound for the volume of the difference of two nearby hyperbolic balls with the same radius.
\begin{figure}[t]
	\centering
	\vspace{-1cm}
	\includegraphics[width=0.8\columnwidth]{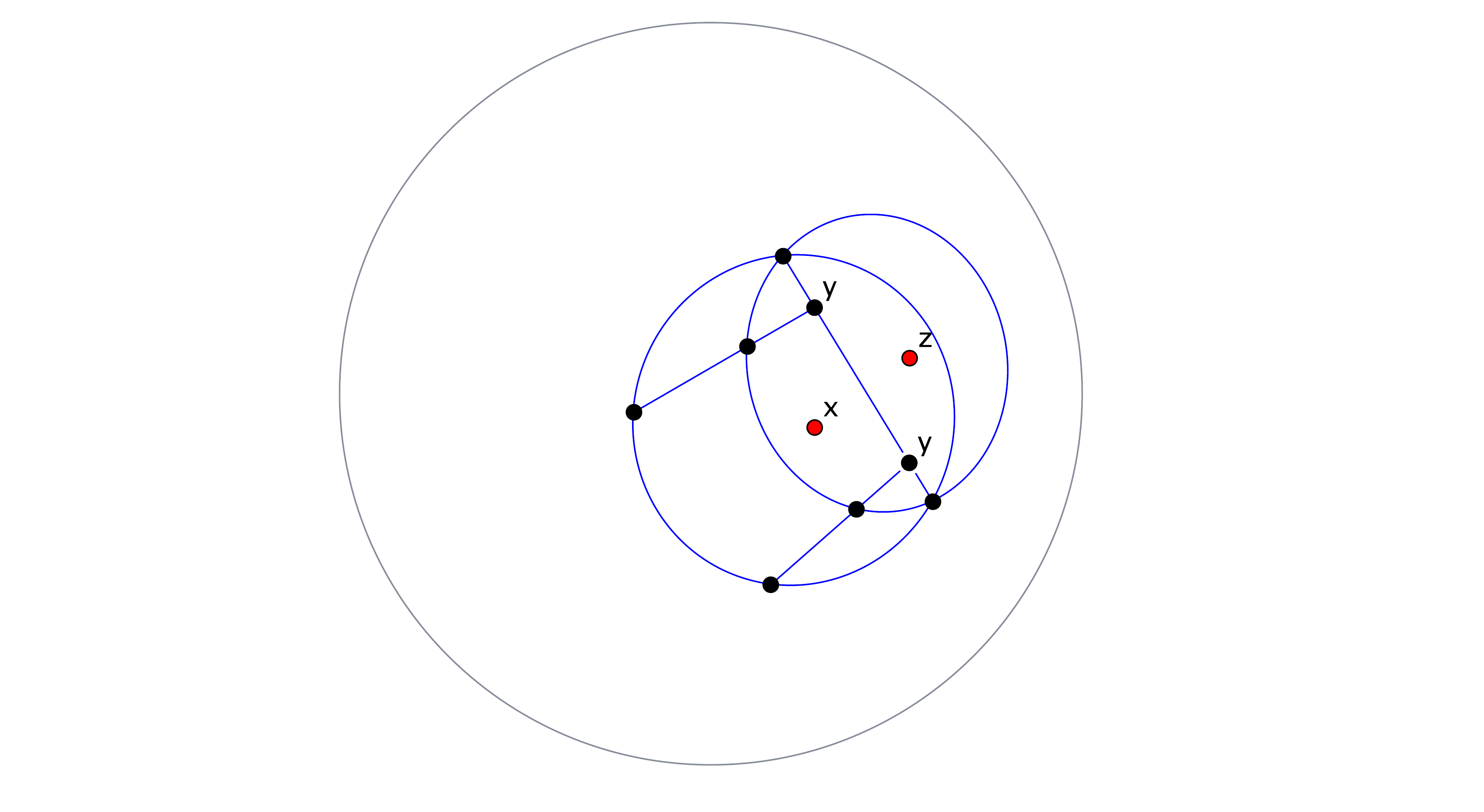}
	\caption{Illustration in the Beltrami-Klein model for the hyperbolic plane of the argument used in the proof of Lemma \ref{voldif}. Shown are the two balls $B(x,r)$ and $B(z,r)$ with $d_h(x,z)<r$, as well as two perpendiculars through points of $B_r(z,x)$.}
	\label{fig:lemma}
\end{figure}

\begin{lemma} \label{voldif}
	Let $x,z \in \mathbb{H}^d$ and $0<s:=d_h(x,z)\leq r$ with $r-s/2\geq 2$. 
	\begin{itemize}
		\item[(i)] It holds that
		\begin{align*}
		\alpha_1se^{(d-{1})(r-s/2)} \le \mathcal{H}^d(B(z,r)\setminus B(x,r))  \le \alpha_2 se^{(d-{1})r},
		\end{align*}
	where $\alpha_1,\alpha_2\in(0,\infty)$ are constants only depending on $d$.
		\item[(ii)] For $s>0$ we have
		\begin{align}
			\omega_d\left[\frac{e^{s(d-1)}}{(d-1)2^{d-1}}-\frac{(d-1)e^{s(d-3)}}{(d-3)2^{d-1}}\right]\le \mathcal{H}^d(B_s)\le \omega_d \frac{e^{s(d-1)}}{(d-1)2^{d-1}}. \label{volbou}
		\end{align}
	\end{itemize}
\end{lemma}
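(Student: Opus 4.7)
I would treat the two parts separately, starting with the simpler statement~(ii). For the upper bound in (ii), I would use the identity $\mathcal{H}^d(B_s)=\omega_d\int_0^s\sinh^{d-1}(u)\,\mathrm d u$ from \eqref{eq:VolBall} together with $\sinh(u)\le e^u/2$. For the lower bound I would apply Bernoulli's inequality $(1-e^{-2u})^{d-1}\ge 1-(d-1)e^{-2u}$ to the factorization $\sinh^{d-1}(u)=2^{-(d-1)}e^{(d-1)u}(1-e^{-2u})^{d-1}$, yielding $\sinh^{d-1}(u)\ge 2^{-(d-1)}(e^{(d-1)u}-(d-1)e^{(d-3)u})$, and then integrate from $0$ to $s$, dropping the nonnegative boundary contribution.

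The upper bound in (i) reduces to the estimates from (ii) through the triangle-inequality inclusion $B(z,r)\setminus B(x,r)\subseteq B(z,r)\setminus B(z,r-s)$, which follows because any $y$ on the left-hand side satisfies $d_h(y,z)\ge d_h(y,x)-s>r-s$. Then $\mathcal{H}^d(B(z,r)\setminus B(z,r-s))\le \frac{\omega_d}{(d-1)2^{d-1}}e^{(d-1)r}(1-e^{-(d-1)s})\le \frac{\omega_d}{2^{d-1}}se^{(d-1)r}$ by $1-e^{-x}\le x$ applied with $x=(d-1)s$.

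The substantive step is the lower bound in (i). I would introduce Fermi coordinates $(\tau,\rho,\omega)\in\mathbb R\times\mathbb R_+\times\mathbb S^{d-2}$ around the geodesic $\ell$ spanned by $x$ and $z$, placing $z$ at $\tau=s/2$ and $x$ at $\tau=-s/2$. The Fermi volume element reads $\cosh(\rho)\sinh^{d-2}(\rho)\,\mathrm d\tau\,\mathrm d\rho\,\mathrm dS^{d-2}$, and the hyperbolic Pythagorean theorem yields $\cosh(d_h(y,z))=\cosh(\tau-s/2)\cosh(\rho)$ and $\cosh(d_h(y,x))=\cosh(\tau+s/2)\cosh(\rho)$, so that $y\in B(z,r)\setminus B(x,r)$ iff $\cosh(r)/\cosh(\tau+s/2)<\cosh(\rho)\le \cosh(r)/\cosh(\tau-s/2)$. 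The inner $\rho$-integral then equals $(d-1)^{-1}(\sinh^{d-1}(\rho_{\max}(\tau))-\sinh^{d-1}(\rho_{\min}(\tau)))$. Applying $\cosh^2 a-\cosh^2 b=\sinh(a-b)\sinh(a+b)$ to compute the two $\sinh$-values and then the large-argument asymptotics $\sinh(u),\cosh(u)\asymp e^u/2$ (uniformly valid in a suitable bulk range of $\tau$ thanks to $r-s/2\ge 2$) produces a pointwise lower bound of the form $c_d\cdot e^{(d-1)(r-\tau)}\sinh((d-1)s/2)$. Integrating in $\tau$ over an interval whose length is comparable to $r-s$ and using $\sinh((d-1)s/2)\ge (d-1)s/2$ then delivers the desired $se^{(d-1)(r-s/2)}$ lower bound.

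The chief obstacle is that $\sinh^{d-1}(\rho_{\max})$ and $\sinh^{d-1}(\rho_{\min})$ share the same leading exponential order, so the factor $s$ in the final bound only emerges upon tracking their difference to first sub-leading order; one must carry the Bernoulli-type corrections precisely enough so that the announced $e^{(d-1)(r-s/2)}$ rate is produced rather than a weaker bound in which the $s$-prefactor is overwhelmed by a cruder $e^{(d-1)r}$ factor.
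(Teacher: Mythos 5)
Your part (ii) is the same argument the paper intends (expand $\sinh^{d-1}$ via $\sinh t=(e^t-e^{-t})/2$ and Bernoulli, then integrate \eqref{eq:VolBall}), and your upper bound in (i) via the inclusion $B(z,r)\setminus B(x,r)\subseteq B(z,r)\setminus B(z,r-s)$ plus $1-e^{-(d-1)s}\le (d-1)s$ is correct and in fact more elementary than the paper's. For the lower bound in (i) you take a genuinely different route: the paper slices $B(z,r)\setminus B(x,r)$ by the lines perpendicular to the bisector hyperplane containing the intersection sphere $\partial(B(z,r)\cap B(x,r))$ (a $(d-2)$-sphere of radius $\arcosh(\cosh r/\cosh(s/2))\ge r-s/2$ by \cite[Theorem 3.5.3]{R2019} and \cite[Lemma 6]{HHT2021}); each such line meets the difference body in a segment of length exactly $s$, so the factor $s$ appears with no cancellation, and in-plane polar integration of $\cosh t\sinh^{d-2}t$ up to $r-s/2$ immediately gives $\alpha_1 s e^{(d-1)(r-s/2)}$. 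Your Fermi coordinates about the axis geodesic instead force the factor $s$ to come out of the difference $\sinh^{d-1}\rho_{\max}-\sinh^{d-1}\rho_{\min}$ of two terms of the same exponential order, which is exactly the delicacy the paper's decomposition avoids.

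That difference can be controlled (e.g.\ by the mean value theorem, writing it as $\int_{\tau-s/2}^{\tau+s/2}(d-1)\bigl(\cosh^2 r/\cosh^2 w-1\bigr)^{(d-3)/2}\cosh^2 r\,\sinh w\,\cosh^{-3}w\,\mathrm{d}w$), but two of your quantitative claims are not correct as stated. First, the pointwise bound $c_d\,e^{(d-1)(r-\tau)}\sinh((d-1)s/2)$ fails for $\tau$ within $O(1)$ of $s/2$: at $\tau=s/2$ one has $\rho_{\max}=r$ and $\cosh\rho_{\min}=\cosh r/\cosh s$, so for small $s$ the difference is of order $s^2e^{(d-1)r}$, not $s\,e^{(d-1)r}$ (the factor $\sinh w$ in the derivative vanishes at $w=0$). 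Second, "integrating over an interval whose length is comparable to $r-s$" is both unnecessary and degenerate when $s$ is close to $r$ (it is empty at $s=r$, which is allowed since only $r-s/2\ge 2$ is assumed); because the $\tau$-integrand decays like $e^{-(d-1)\tau}$, what you actually need is a window of fixed length, say $\tau\in[s/2+1,s/2+2]$, where the pointwise bound does hold provided $s\le r-3$, and this already yields the full order $s\,e^{(d-1)(r-s/2)}$. The remaining regime $s\in(r-3,r]$ then needs a separate (easy) argument, e.g.\ monotonicity of $\mathcal{H}^d(B(z,r)\setminus B(x,r))$ in $s=d_h(x,z)$ reduces it to $s=r-3$. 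With these repairs your approach goes through, but as written the final step does not deliver the stated bound uniformly in the allowed range of $s$.
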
 
\begin{proof}
	We start by observing that the boundary of the intersection $B(z,r)\cap B(x,r)$ is a $(d-2)$-dimensional sphere of radius $\overline{r}:=\arcosh \left(\frac{\cosh(r)}{\cosh(s/2)}\right)$ according to \cite[Theorem 3.5.3]{R2019}. Let $B_r(z,x)$ be the corresponding $(d-1)$-dimensional ball.
	For each $y\in B_r(z,x)$ let $L(y)$ be the hyperbolic line through $y$ which is orthogonal to the hyperbolic hyperplane containing $B_r(z,x)$. By construction, the set $(B(z,r)\setminus B(x,r))\cap L(y)$ is a hyperbolic segment of length $r-(r-s)=s$ and it follows that
	\begin{align*}
		\mathcal{H}^d(B(z,r) \setminus B(x,r)) &\geq \int_{B_r(z,x)}\calh^1((B(z,r)\setminus B(x,r))\cap L(y))\,\cosh(d_h(L(y),m))\,\calh^{d-1}(\mathrm{d}y)\\
		&= s\int_{B_r(z,x)}\cosh(d_h(L(y),m))\,\calh^{d-1}(\mathrm{d}y),
	\end{align*}
	see Figure \ref{fig:lemma}, where $m$ stands for the centre of $B_r(z,x)$ and $d_h(L(y),m)$ for the hyperbolic distance from $L(y)$ to $m$.
	
	To derive a lower bound for the integral we use that $\overline{r} \ge r-s/2$ according to \cite[Lemma 6]{HHT2021} and our assumption. Using the polar integration formula \eqref{eq:PolarIntegration} within the hyperbolic hyperplane containing $B_r(x,z)$ this leads to
	\begin{align*}
	&\int_{B_r(z,x)}\cosh(d_h(L(y),m))\,\calh^{d-1}(\mathrm{d}y)= {\omega_{d-1}}\int_0^{r-s/2}\cosh(t)\,\sinh^{d-2}(t)\,{\rm d}t,
	\end{align*}
which implies the asserted lower bound for $d=2$, since $\cosh(t)$ is the derivative of $\sinh(t)$ and $\sinh(t)\ge e^{t-3}$ for $t\ge 2$. For $d \geq 3$ we use now additionally that $\cosh(t)\geq e^t/2$ for all $t\geq 0$ and that $\sinh(t)\geq t$ for all $0\leq t\leq 2$. This gives
	\begin{align*}	
		\mathcal{H}^d(B(z,r) \setminus B(x,r)) &\geq s\omega_{d-1}\Big(\int_2^{r-s/2}\cosh(t)\,\sinh^{d-2}(t)\,{\rm d}t+\int_0^2\cosh(t)\,\sinh^{d-2}(t)\,{\rm d}t\Big)\\
		&\geq \frac{s\omega_{d-1}}{2}\Big( e^{-3(d-2)}\int_2^{r-s/2}e^{(d-1)t}\,{\rm d}t+\int_0^2 t^{d-2}\,{\rm d}t\Big)\\
		&= \frac{s\omega_{d-1}}{2(d-1)}\Big(\frac{e^{(d-1)(r-s/2)}}{e^{3(d-2)}}-e^{-d+4}+2^{d-1}\Big)\\
		&\geq \alpha_1se^{(d-1)(r-s/2)},
	\end{align*}
	where we used the fact that $2^{d-1}-e^{-d+4}>0$ for $d\geq 3$.
	
	To obtain an upper bound, let $\tilde{B}_r(z,x)$ be the $(d-1)$-dimensional ball with radius $r$ that is contained in the hyperbolic hyperplane through $B_r(z,x)$. We use the polar integration formula \eqref{eq:PolarIntegration} in  the hyperbolic hyperplane containing $B_r(z,x)$ and obtain
	\begin{align*}
		\mathcal{H}^d(B(z,r) \setminus B(x,r))&\le s 	\int_{\tilde{B}_r(z,x)}\cosh(d_h(L(y),m))\,\calh^{d-1}(\mathrm{d}y)\\ 
		&= s{\omega_{d-1}}\int_0^{r}\cosh(t)\,\sinh^{d-2}(t)\,{\rm d}t\\
		&\leq {s\omega_{d-1}\over 2^{d-2}}\int_0^{r}e^{(d-1)t}\,{\rm d}t\\
		&= {s\omega_{d-1}\over (d-1)2^{d-2}}(e^{(d-1)r}-1)\\
		&\leq \alpha_2 se^{(d-1)r},
	\end{align*}
	where we additionally applied the inequalities $\cosh(t)\leq e^t$ and $\sinh(t)\leq e^t/2$ for all $t\geq 0$. We thus conclude the proof of part (i). The inequalities in (ii) follow from \eqref{eq:VolBall} and the fact that $\sinh(t)=\frac{e^{t}-e^{-t}}{2}$ for $t \in \R$.
\end{proof}

\subsection{Poisson approximation}

In this section we rephrase a special case of \cite[Theorem 4.1]{BSY21} which we will use to prove Theorem \ref{Th1}. We work with a Polish space $\mathbb{X}$ and denote by ${\bf N}_\mathbb{X}$ the space of locally finite, simple counting measures on $\mathbb{X}$. As usual, we identify each element in ${\bf N}_\mathbb{X}$ with its support. Let $f:\XX\times{\bf N}_\XX\to\R$ and $g:\XX\times{\bf N}_\XX\to\{0,1\}$ be a measurable functions and define for $\mu\in{\bf N}_\XX$ the random point process
\begin{equation}\label{eq:XiMu}
	\xi[\mu] := \sum_{x\in\mu}g(x,\mu)\delta_{f(x,\mu)},
\end{equation}
whose intensity measure is denoted by $\E\xi[\mu](\,\cdot\,)$. Let $\mathcal F$ denote the system of closed sets in $\mathbb{X}$ endowed with the Fell topology, see \cite[p.\ 256]{last2017lectures}. To each point $x\in\XX$ we associate in a measurable way a stopping set $\mathcal S(x,\,\cdot\,):\mathbf{N}_{\mathbb{X}} \to \mathcal F$ as well as a closed set $S_{x}\subset\XX$ with $x\in S_x$, where we recall that the stopping property of $S(x,\,\cdot\,)$ means that $\{\mu\in\mathbf{N}_{\mathbb{X}}:S(x,\mu)\subseteq K\}=\{\mu\in\mathbf{N}_{\mathbb{X}}:S(x,\mu\cap K)\subseteq K\}$ for all compact $K\subseteq\mathbb{X}$. It is assumed that $f$ and $g$ are localized in the sense that for $\mathcal{S}(x,\mu) \subset S_x$,
\begin{align*}
	g(x,\mu) &= g(x,\mu\cap S_x)\\
	f(x,\mu) &= f(x,\mu\cap S_x)\quad\text{if}\quad g(x,\mu)=1.
\end{align*}

To rephrase the result from \cite{BSY21} we need the total variation distance $\mathbf{d_{TV}}(\nu_1,\nu_2)$ between two measures $\nu_1,\nu_2$ on $\XX$, which is defined as
$$
\mathbf{d_{TV}}(\nu_1,\nu_2) := \sup_{B}|\nu_1(B)-\nu_2(B)|,
$$
where the supremum runs over all Borel subsets $B$ of $\XX$ satisfying $\nu_1(B),\nu_2(B)<\infty$. Now, let $\eta$ be a Poisson process on $\XX$ with intensity measure $\E\eta(\,\cdot\,)$, and define the quantities
\begin{align*}
	E_1&:=\int_{\XX} \E[g(x,\eta+\delta_x) \mathds 1 \{\mathcal S(x,\eta)\not \subset S_x\}]	\,\E\eta({\rm d}x),\\
	E_2 &:= \int_{\XX}\int_{\XX}\mathds{1}\{S_x\cap S_z\neq\varnothing\}\E[g(x,\eta+\delta_x)]\E[g(z,\eta+\delta_z)]\,\E\eta({\rm d}z)\E\eta({\rm d}x),\\
	E_3 &:= \int_{\XX}\int_{\XX}\mathds{1}\{S_x\cap S_z\neq\varnothing\}\E[g(x,\eta+\delta_x+\delta_z)g(z,\eta+\delta_x+\delta_z)]\,\E\eta({\rm d}z)\E\eta({\rm d}x),
\end{align*}
where $\delta_{(\cdot)}$ denotes the Dirac measure. Assuming finally that $\E\xi[\eta](\R)<\infty$, \cite[Theorem 4.1]{BSY21} says that the Kantorovich-Rubinstein distance $\mathbf{d_{KR}}(\xi[\eta],\zeta)$ between $\xi[\eta]$ and a Poisson process $\zeta$ on $\R$ with finite intensity measure $\E\eta(\,\cdot\,)$ can be estimated from above by
\begin{align}\label{eq:BSYbound}
	\mathbf{d_{KR}}(\xi[\eta],\zeta) \leq \mathbf{d_{TV}}(\E\xi[\eta],\E\zeta) + 2(E_1+E_2+ E_3).
\end{align}
We refer to \cite{BSY21,DST16} for a a dual formulation as well as more details on the Kantorovich-Rubinstein distance between random points measures. Let us also remark that if in \eqref{eq:BSYbound} the Kantorovich-Rubinstein distance is replaced by the total variation distance, the result without the factor $2$ on the right-hand side can be found in  \cite{BarbourBrown}.

\section{Proof of Theorem \ref{Th1}}\label{sec:Proof2}

Our goal is to apply the Poisson approximation bound \eqref{eq:BSYbound}. To this end, we need to specify $\XX$, $\eta$, the functions $f$ and $g$ as well as the sets $S(x,\,\cdot\,)$ and $S_x$. For the space $\XX$ we take the $d$-dimensional hyperbolic space $\H^d$ and for $\eta$ a Poisson process on $\H^d$ with intensity measure $\calh^d$. To ensure finiteness of the involved intensity measures we fix some arbitrary $c\in \mathbb{R}$ and define the two functions $g,f:\H^d\times{\bf N}_{\H^d}\to\R$ by
\begin{align*}
	&g(x,\mu):=\mathds{1}\{x \in B_R\} \mathds{1}\{\mathcal{H}^d(B_{r(x,k,{\mu})})-v_k(R)>c\},\\
	&f(x,\mu):=\mathcal{H}^d(B_{r(x,k,{\mu})})-v_k(R),
\end{align*}
where we recall that $r(x,k,\mu)$ denotes the distance to the $k$th nearest neighbour of a point $x$ in the support of a simple counting measure $\mu$.
Then the point process of exceedances  \eqref{eq:PPExceedances} restricted to the interval $(c,\infty)$ has the same distribution as $\xi[\eta]$ in \eqref{eq:XiMu} using the functions $f$ and $g$ as just defined. Next, for $x \in \mathbb{H}^d$ let $\mathcal S(x,\mu):=B(x,r(x,k,\mu))$ and $S_x:=B(x,r_{c'})$ be the closed ball with centre at $x$ and $\mathcal{H}^d$-measure $c'+v_k(R)>0$ for some $c'> \max(c,0)$ to be specified below. We emphasize that this choice implicitly determines the radius $r_{c'}$ via \eqref{eq:VolBall}. By construction, the functions $f$ and $g$ are localized to the sets $\mathcal S(x,\mu)$. 

Taking $s:=r_{c'}$ in Lemma \ref{voldif}(ii) we obtain
		\begin{equation}\label{rcbou}
			\begin{split}
			\log (c'+v_k(R)) &\le r_{c'}(d-1) +\log\left(\frac{\omega_d}{(d-1)2^{d-1}}\right) \\
			&\le \log\left(\frac{(d-3)e^{2r_c'}}{(d-3)e^{2r_c'}-(d-1)^2}\right)+\log (c'+v_k(R)).
						\end{split}
		\end{equation}
		Finally, we let $\zeta$ be an inhomogeneous Poisson process on $\R$ with intensity measure $\E\zeta((u,\infty))=e^{-u}$, $u>c$, as in the statement of Theorem \ref{Th1}. We can now apply \eqref{eq:BSYbound} to conclude that for all $R>0$,
	\begin{align}\label{eq:5922A}
		\mathbf{d_{KR}}(\xi_R^{(k)} \cap (c,\infty),\zeta \cap (c,\infty)) \le \mathbf{d_{TV}}(\mathbb{E}\xi_R^{(k)} \cap (c,\infty), \mathbb{E}\zeta \cap (c,\infty))+2(E_1+E_2+E_3),
	\end{align}
	where the error terms $E_1$, $E_2$ and $E_3$ are given by
	\begin{align*}
		E_1&:=\int_{\H^d} \mathds 1\{x \in B_R\} \P[\mathcal{H}^d(B_{r(x,k,\eta)})>c+v_k(R),\,r(x,k,\eta) >r_{c'}]\,\mathcal{H}^d(\mathrm{d}x),\\
		E_2&:=\int_{\H^d}\int_{\H^d} \mathds{1}\{x,z \in B_{R}, B(x,r_{c'}) \cap B(z,r_{c'}) \neq \emptyset\}\\
		&\qquad\qquad \times \P[\mathcal{H}^d(B_{r(x,k,\eta)})>c+v_k(R)] \P[\mathcal{H}^d(B_{r(z,k,\eta)})>c+v_k(R)]\, \mathcal{H}^d(\mathrm{d}z) \mathcal{H}^d(\mathrm{d}x),\\
		E_3&:=\int_{\H^d}\int_{\H^d} \mathds{1}\{x,z \in B_{R}, B(x,r_{c'}) \cap B(z,r_{c'}) \neq \emptyset\}\\
		&\qquad\qquad \times \P[\mathcal{H}^d(B_{r(x,k,\eta+\delta_z)})>c+v_k(R),\,\mathcal{H}^d(B_{r(z,k,\eta+\delta_x)})>c+v_k(R)]\, \mathcal{H}^d(\mathrm{d}z) \mathcal{H}^d(\mathrm{d}x).
	\end{align*}
	The remaining parts of the proof bound individually the four terms on the right hand side of \eqref{eq:5922A}.
	
	\paragraph{Bounding the total variation distance.}
	In a first step we investigate the intensity measure of $\xi_R^{(k)}$. Let $r_c>0$ be such that $\mathcal{H}^d(B_{r_c})=v_k(R)+c$ and note that $\mathcal{H}^d(B_{r(x,k,\mu)})>v_k(R)+c$ if and only if $\mu(B(x,r_c))\le {k-1}$. From the Mecke equation for Poisson processes \cite[Theorem 4.1]{last2017lectures} we obtain that for all $u>c$,
	\begin{align*}
		\mathbb{E}\xi_R^{(k)}((u,\infty))&=\int_{\H^d} \mathds{1}\{x \in B_{R}\} \P[\mathcal{H}^d(B_{r(x,k,\eta)})>u+v_k(R)]\, \mathcal{H}^d(\mathrm{d}x).
	\end{align*}
	Since the distribution of $\eta$ is invariant under hyperbolic isometries and $\mathcal{H}^d(B_{r(x,k,\eta)})>u+v_k(R)$ if and only if there are at most ${k-1}$ points of $\eta$ in a ball with $\mathcal{H}^d$-measure $u+v_k(R)$ around $x$, the expression is equal to
	\begin{align*}
		&\mathcal{H}^d(B_R) e^{-u-v_k(R)}\sum_{\ell=0}^{{k-1}} \frac{(u+v_k(R))^\ell}{\ell!}\\
		&\,=e^{-u}\mathcal{H}^d(B_R)  \frac{(k-1)!2^{d-1} (d-1)e^{-R(d-1)}}{\omega_d(R(d-1))^{k-1}} \sum_{\ell=0}^{{k-1}} \frac{\big(u+v_k(R)\big)^\ell}{\ell!},
	\end{align*}
	where we used the definition \eqref{eq:vkr} of $v_k(R)$ and the Poisson property of $\eta$. Hence, the Lebesgue density $\varrho$ of $\mathbb{E} \xi_R^{(k)}\cap (c,\infty)$ is given by
	\begin{align*}
		\varrho(u)=e^{-u}\mathcal{H}^d(B_R) \frac{2^{d-1} (d-1)e^{-R(d-1)}(u+v_k(R))^{k-1}}{\omega_d(R(d-1))^{k-1}},\qquad u>c.
	\end{align*}
	Now, \eqref{volbou} gives for $R>0$ large enough
	\begin{align}
		\textbf{d}_{\textbf{TV}}(\mathbb{E}\xi_R^{(k)} \cap (c,\infty),\mathbb{E}\zeta \cap (c,\infty)) &\le \int_c^\infty |\varrho(u)-e^{-u}|\,\mathrm{d}u\nonumber\\
		&=  \int_c^\infty \Big| e^{-u}\mathcal{H}^d(B_R) \frac{2^{d-1} (d-1)e^{-R(d-1)}(u+v_k(R))^{k-1}}{\omega_d(R(d-1))^{k-1}}-e^{-u}\Big|\, \mathrm{d}u \nonumber\\
		&\le e^{-c} \Big[(1+\beta_1e^{-2R})\Big(1+\frac{c+\beta_2\log R}{R(d-1)}\Big)^{k-1}-1\Big]\nonumber\\
		&\le\begin{cases}
			\beta_1 e^{-c} e^{-2R} &: k=1\\
			\beta_3 e^{-c}(c+\beta_2\log R) R^{-1} &: k\geq 2
		\end{cases}\label{tvbou}
	\end{align}
	for constants $\beta_1,\beta_2, \beta_3>0$ only depending on $k$ and $d$.	

		\paragraph{Bounding $\mathbf E_1$.}
	Note that $r(x,k,\eta) >r_{c'}$ if and only if $\mathcal{H}^d(B_{r(x,k,\eta)})>c'+v_k(R)$. Hence, we find from \eqref{volbou} similarly to the estimate leading to \eqref{tvbou} that for all $c'>\max(c,0)$,
		\begin{align}
			E_1=\mathbb{E}\xi_R^{(k)} \cap (c',\infty)\le \begin{cases}
				(1+\beta_1e^{-2R})e^{-c'}&: k=1\\
				\beta_4 e^{-c'} 
		\Big(\frac{c'+v_k(R)}{R(d-1)}\Big)^{k-1}&: k\geq 2
		\end{cases}\label{E1bou}
		\end{align}
	where the constant $\beta_4>0$ depends only on $k$.
	\paragraph{Bounding $\mathbf E_2$.}
	Since $B(x,r_c) \cap B(z,r_c) \neq \emptyset$ if and only if the hyperbolic distance of $x$ and $z$ is at most $2r_c$, we obtain from the invariance of $\eta$ under hyperbolic isometries that $E_2$ is bounded by
	\begin{align*}
		&\mathbb{E} \xi_R^{(k)}((c,\infty)) \mathbb{P}[\mathcal{H}^d(B_{r(p,k,\eta)})>c+v_k(R)] \int_{\H^d} \mathds{1}\{B(p,r_{c'}) \cap B(z,r_{
				c'}) \neq \emptyset\}\, \mathcal{H}^d({\rm d}z)\\
		&\quad \leq (\mathbb{E} \xi_R^{(k)}((c,\infty)))^2 \frac{\mathcal{H}^d(B_{2r_{c'}})}{\mathcal{H}^d(B_{R})}.
	\end{align*}
		From \eqref{rcbou} we have for $R>0$ large enough that
		\begin{equation}\label{eq:EstimateRc}
			r_{c'}\le \frac{1}{d-1} \log (c'+v_k(R))+\beta_5
		\end{equation}
		for some constant $\beta_5>0$ only depending on $d$. Hence, using the definition of $v_k(R)$, there is another constant $\beta_6>0$ only depending on $d$ such that for all $R>0$,
		\begin{align}
			E_2 \le \beta_6 (c'+R)^2 e^{-R(d-1)}. \label{E2bou}
		\end{align}
	
	\paragraph{Bounding $\mathbf E_3$.}
		First we consider the case $k=1$. Note that $(\eta-\delta_x)(B(x,r_c))=0$ and $(\eta-\delta_z)(B(z,r_c))=0$ implies  for $x,z \in \eta$ that $d_h(z,x)\ge r_c$. Hence, we obtain for $E_3$ the upper bound
	\begin{align}
		E_3\leq &\int_{\H^d}\int_{\H^d} \mathds{1}\{z \in B_{R}, x \in B_R \cap B(z,2r_{c'})\setminus B(z,r_c)\} \P[\eta(B(z,r_c))=0]\nonumber\\
		&\qquad \qquad  \times \P[\eta(B(z,r_c)\setminus B(x,r_c))=0]\,\mathcal{H}^d(\mathrm{d}z) \mathcal{H}^d(\mathrm{d}x)\nonumber\\
		& \le \mathbb{E} \xi_R^{(1)}((c,\infty)) \int_{\mathbb{H}^d} \mathds{1}\{x \in B(p,2r_{c'})\setminus B(p,r_c)\}  \P[\eta(B(z,r_c)\setminus B(x,r_c))=0]\, \mathcal{H}^d(\mathrm{d}x).\label{E2k1}
	\end{align}
	Let $z \in \H^d\setminus B(p,r_c)$ and observe that $\mathcal{H}^d(B(z,r_c)\setminus B(p,r_c)) \ge \mathcal{H}^d(B_{r_c})/2$. Thus, 
	\begin{align*}
		\P[\eta(B(z,r_c)\setminus B(x,r_c))=0] = \exp(-\mathcal{H}^d(B(z,r_c)\setminus B(x,r_c))) \le  \exp(-\mathcal{H}^d(B_{r_c})/2).
	\end{align*} 
	This gives for \eqref{E2k1} the bound
	\begin{align} \label{eg:E2bouk1}
		\mathbb{E} \xi_R^{(1)}((c,\infty)) \mathcal{H}^d(B_{2r_{c'}})e^{-R(d-1)/2}\leq	\beta_{7} e^{2(d-1)r_{c'}}e^{-R(d-1)/2}
	\end{align}
	for some constant $\beta_7>0$ only depending on $d$.
	
	Next we consider the case $k \ge 2$ and let $a \in (0,1]$, its precise value will be specified later. In order to bound $E_3$ we distinguish {the situations that $z \in B(x,ar_{c'})$ and $z \notin B(x,ar_{c'})$. In the first case we have that $\mathcal{H}^d(B_{r(x,k,\eta+\delta_z)})>v_k(R)+c$ and $\mathcal{H}^d(B_{r(z,k,\eta+\delta_x)})>v_k(R)+c$ if and only if $\eta(B(x,r_c))\le {k-2}$ and $\eta(B(z,r_c))\le {k-2}$. This allows us to bound $E_3$ by}
	\begin{align}
		&\int_{\H^d}\int_{\H^d} \mathds{1}\{x \in B_{R}, z \in B_R \cap B(x,{ar_{c'}})\} \P[\eta(B(x,r_c))\le {k-2}]\nonumber\\
		&\qquad \qquad  \times \P[\eta(B(z,r_c)\setminus B(x,r_c))\le {k-2}]\,\mathcal{H}^d({\rm d}z) \mathcal{H}^d({\rm d}x)\label{E2a}\\
		& + \int_{\H^d}\int_{\H^d} \mathds{1}\{x \in B_{R}, z \in B_R \cap (B(x,2r_{c'})\setminus B(x,{ar_{c'}}))\} \P[\eta(B(x,r_c))\le {k-1}]\nonumber\\
		&\qquad \qquad  \times \P[\eta(B(z,r_c)\setminus B(x,r_c))\le {k-1}]\,\mathcal{H}^d({\rm d}z) \mathcal{H}^d({\rm d}x)\label{E2b}.
	\end{align}
	By invariance of $\eta$ under hyperbolic isometries, the first term \eqref{E2a} is equal to 
	\begin{align}
		&\mathbb{E} \xi_{R}^{(k)}((c,\infty)) \frac{\P[\eta(B_{r_c})\le {k-2}]}{\P[\eta(B_{r_c})\le {k-1}]} \int_{\H^d} \mathds{1}\{z \in B_{ar_{c'}}\} \P[\eta(B(z,r_c)\setminus B_{r_c})\le {k-2}]\, \mathcal{H}^d(\mathrm{d}z)\nonumber\\
		&\quad= \omega_d \mathbb{E} \xi_{R}^{(k)}((c,\infty))\frac{\P[\eta(B_{r_c})\le {k-2}]}{\P[\eta(B_{r_c})\le {k-1}]}\nonumber\\
		&\quad \quad \times \sum_{\ell=0}^{{k-2}}  \int_0^{{ar_{{c'}}}} \sinh^{d-1}(s) \exp\left(-\calh^d(B(z,r_c)\setminus B_{r_c})\right) \frac{(\calh^d(B(z,r_c)\setminus B_{r_c}))^\ell}{\ell!}\, \mathrm{d}s,\label{E2aint}
	\end{align}
	where we applied the polar integration formula \eqref{eq:PolarIntegration} with $z:=\exp_p(sv)$ for some arbitrary $v \in \mathbb{S}_p^{d-1}$ in \eqref{E2aint}.	Note that we always have that $2(r_c-2)>r_c\geq s$ if $r_c\geq 5$, which can be achieved by choosing the parameter $R$ large enough. Thus, for such $R$ we find by Lemma \ref{lem:LowerBoundVolumeBall} and Lemma \ref{voldif}(i) that
	\begin{align}
		&\int_0^{{ar_{{c'}}}} \sinh^{d-1}(s) \exp\left(-\calh^d(B(z,r_c)\setminus B_{r_c})\right) \frac{(\calh^d(B(z,r_c)\setminus B_{r_c}))^\ell}{\ell!}\, \mathrm{d}s\nonumber\\
		&\leq {1\over 2^{d-1}}\int_0^{{ar_{c'}}} e^{(d-1)s}\,\exp\Big(-{\alpha_1se^{(d-1)(r_c-{s\over 2})}}\Big) {(\alpha_2 s e^{(d-1)r_c})^\ell\over\ell!}\,\mathrm{d}s\nonumber\\
		&\leq {1\over 2^{d-1}}{(\alpha_2 e^{(d-1)r_c})^\ell\over\ell!}\int_0^{\infty} \exp\Big(-s\Big(\alpha_1e^{{(d-1)(2r_c-ar_{c'}) \over 2}}-(d-1)\Big)\Big) s^\ell\,\mathrm{d}s\nonumber\\
		&\leq \beta_8 {\exp\Big( - (d-1)\frac{2r_c-a(\ell+1)r_{c'}}{2} \Big)},\label{eq:csc}
	\end{align}
	which takes the maximum value $\beta_{8} e^{- (d-1)\frac{2r_c-a(k-1)r_{c'}}{2}}$ for {$\ell=k-2$}, where $\beta_{8}>0$ is a constant only depending on $d$.

	Next, we recall that for a general Poisson random variable $X$ with mean $\lambda>0$ and any bounded function $h:\{0,1,\ldots\}\to\R$ one has that
	\begin{align*}
	\E[\lambda h(X+1)]=\E[Xh(X)];
	\end{align*}
	in fact, this is the famous Chen-Stein characterization of the Poisson distribution. In particular, applying this to the function $h(n)={\bf 1}\{n\leq k\}$ we see that
	\begin{align*}
		\lambda\P(X\leq k-1) = \E[X{\bf 1}\{X\leq k\}]\leq k\P(X\leq k).
	\end{align*}
	Applying this to the Poisson random variable $\eta(B_{r_c})$ which has mean $\calh^d(B_{r_c})=v_k(R)+c$ we find from \eqref{eq:vkr} that 
	$$
	\frac{\P[\eta(B_{r_c})\le {k-2}]}{\P[\eta(B_{r_c})\le {k-1}]} \le {k-1\over v_k(R)+c}.
	$$
	
	 {Hence, we obtain from \eqref{E2aint} and \eqref{eq:csc}  that \eqref{E2a} is bounded by
		\begin{align*}
			\beta_9 \mathbb{E} \xi_{R}^{(k)}((c,\infty)){k-1\over v_k(R)+c} e^{- (d-1)\frac{2r_c-a(k-1)r_{c'}}{2}} \leq \beta_{10} R^{-1} e^{- (d-1)\frac{2r_c-a(k-1)r_{c'}}{2}},
		\end{align*}
		where $\beta_9, \beta_{10}>0$ are constants only depending on $d$ and $k$.
	}
	
	Now, we consider \eqref{E2b}. By invariance of $\eta$ under hyperbolic isometries, \eqref{E2b} is the same as
	\begin{align*}
		&\mathbb{E} \xi_{R}^{(k)}((c,\infty))  \int_{\H^d} \mathds{1}\{z \in B_{2r_{c'}} \setminus B_{ar_{c'}}\} \P[\eta(B(z,r_c)\setminus B_{r_c})\le {k-1}]\, \mathcal{H}^d(\mathrm{d}z)\nonumber\\
		&= \omega_d \mathbb{E} \xi_{R}^{(k)}((c,\infty)) \sum_{\ell=0}^{{k-1}}  \int_{ar_{c'}}^{2r_{c'}} \sinh^{d-1}(s) \exp\left(-\calh^d(B(z,r_c)\setminus B_{r_c})\right) \frac{(\calh^d(B(z,r_c)\setminus B_{r_c}))^\ell}{\ell!}\, \mathrm{d}s,
	\end{align*}
	where we again applied the polar integration formula \eqref{eq:PolarIntegration} with $z:=\exp_p(sv)$ for some arbitrary $v \in \mathbb{S}_p^{d-1}$.
	Using that $\calh^d(B(z,r_{c'})\setminus B_{r_c}) \ge \calh^d(B_{s/2})$ for $d_h(p,z)=s$, we find from Lemma \ref{lem:LowerBoundVolumeBall} that $\calh^d(B(z,r_c)\setminus B_{r_c}) \ge \gamma_{d}e^{(d-1)s/2}$ (the result can indeed be applied, since {$s/2 \geq ar_c/2 \geq 2$} for large enough $R$), see Figure \ref{fig:k=1}. Since the function $u \mapsto e^{-u}u^\ell$ is decreasing for $u \ge u_0$ large enough, we obtain for {$M>0$ to be specified below and} $R$ large enough, 
		\begin{align}
			\nonumber &\int_{ar_{c'}}^{2r_{c'}} \sinh^{d-1}(s) \exp\left(-\calh^d(B(z,r_c)\setminus B_{r_c})\right) \frac{(\calh^d(B(z,r_c)\setminus B_{r_c}))^\ell}{\ell!}\, \mathrm{d}s\\
			\nonumber &\quad \le \frac{1}{2^{d-1}} \int_{ar_{c'}}^{2r_{c'}} e^{(d-1)s} \exp\Big(-\gamma_dse^{(d-1)s/2} \Big) \frac{(\gamma_dse^{(d-1)s/2})^\ell}{\ell!} \mathrm{d}s\\
			\nonumber &\quad \le \frac{ e^{-r_{c'}\frac{M(d-1)a}{2}}}{2^{d-2}(d-1)(ar_{c'})^{M+2}} \int_{ar_{c'}}^{2r_{c'}} \Big(1+{s(d-1) \over 2}\Big)e^{(d-1)s/2} \exp\Big(-\gamma_dse^{(d-1)s/2} \Big) \frac{\gamma_d^\ell(se^{(d-1)s/2})^{\ell+1+M}}{\ell!}\mathrm{d}s\\
			\nonumber &\quad \le \beta_{11} r_{c'}^{-M-2} e^{-r_{c'}\frac{M(d-1)a}{2}},
		\end{align}
	where we have used the substitution $t=se^{(d-1)s/2}$ for the last inequality and $\beta_{11}>0$ is a constant depending on $d$ and on $M$ .
	\begin{figure}[t]
		\centering
		\vspace{-2cm}
		\includegraphics[width=\columnwidth]{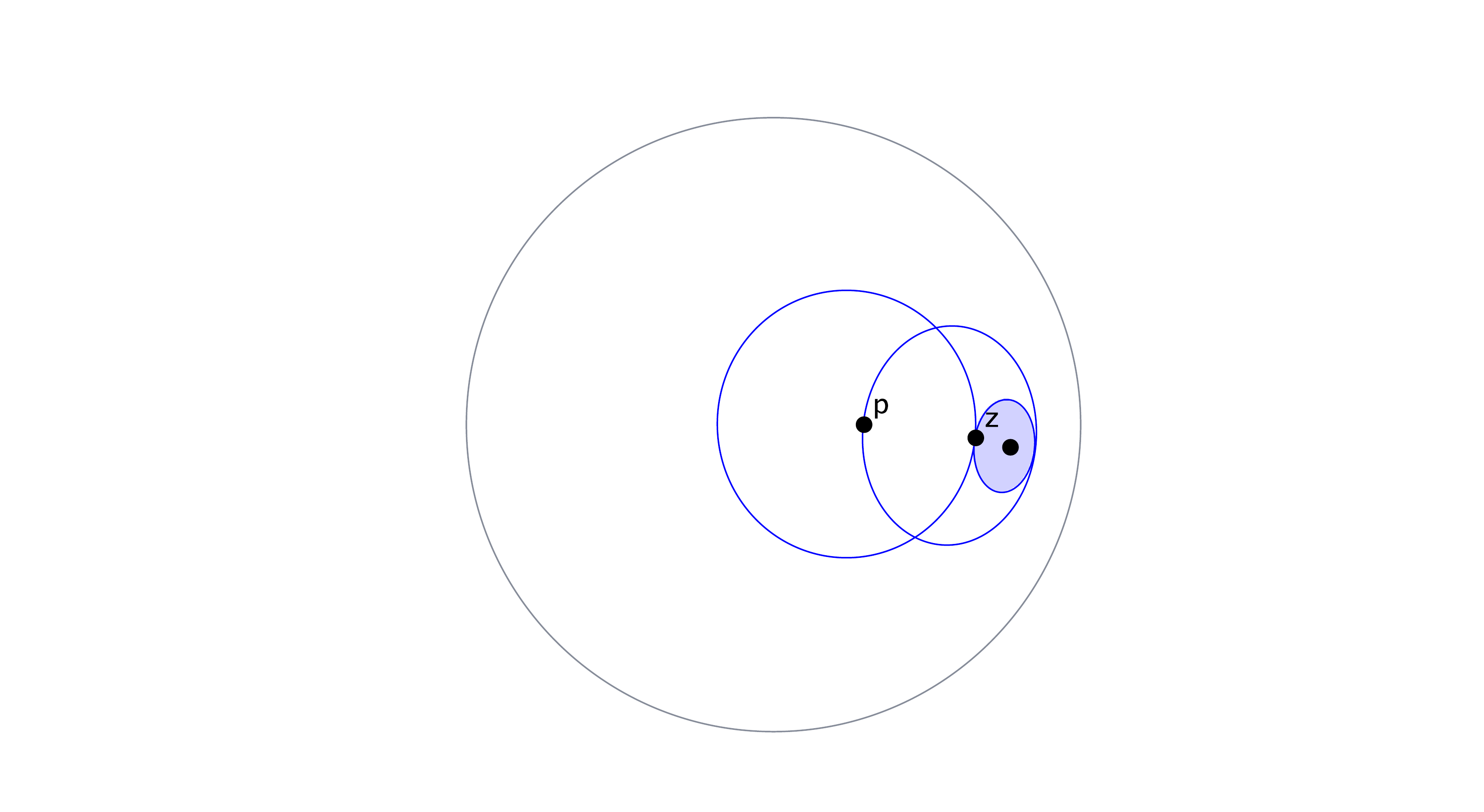}
		\caption{Illustration in the Beltrami-Klein model for the hyperbolic plane of the argument used in the estimate of \eqref{E2b}. Shown are the balls $B(p,r_c)$ and $B(z,r_c)$, where $d_h(p,z)=r_c$. The blue ball has radius $r_c/2$.}
		\label{fig:k=1}
	\end{figure}
	Summarizing, we have shown that for large enough $R$,
	\begin{align}\label{eq:6323a}
		E_3 \leq \begin{cases}
			\beta_{7} e^{2(d-1)r_{c'}}\exp(-\alpha_1 r_c e^{(d-1)r_c/2}) &: k=1\\
			\beta_{10} R^{-1}{\exp\left( - (d-2)\frac{2r_c-a(k-1)r_{c'}}{2} \right)} +{\beta_{11} e^{-r_{c'}\frac{M(d-1)a}{2}}} &: k\geq 2.
		\end{cases}
	\end{align}
	{Thus, choosing $a :={r_c \over{k r_{c'}}}\in (0,1]$} and $M>0$ so large that $Ma>2$, we find for $k \ge 2$ that 
		\begin{align}
			E_3 \le \beta_{12}R^{-1},\label{eq:E2bou}
		\end{align}
		where $\beta_{12}$ is a constant depending on $c$, $d$ and on $k$.
	
	\paragraph{Completing the proof.} After having estimated all terms in \eqref{eq:5922A} we first conclude for $k=1$ from \eqref{tvbou}, \eqref{E1bou}, \eqref{E2bou} and \eqref{eq:6323a} that for all sufficiently large $R$,
\begin{align*}
	&\mathbf{d_{KR}}(\xi_R^{(1)} \cap (c,\infty),\zeta \cap (c,\infty))\\
	&\quad\le \beta_1e^{-c} e^{-2R}+2\Big\{	(1+\beta_1e^{-2R})e^{-c'}
	+\beta_9 (c'+R)^2 e^{-R(d-1)} +	\beta_{7} e^{2(d-1)r_{c'}}e^{-R(d-1)/2}\Big\}.
\end{align*}
Choosing $c'=c+\log R$ and using \eqref{eq:EstimateRc} we see that
\begin{align*}
\mathbf{d_{KR}}(\xi_R^{(1)} \cap (c,\infty),\zeta \cap (c,\infty)) \le \begin{cases}
	C_{1,d} \,Re^{-R(d-1)/2} &: d\leq 5\\
	C_{1,d} \,e^{-2R} &: d\geq 6.
\end{cases}
\end{align*}
where $C_{1,d}>0$ is some constant only depending on $c$ and on $d$, which is the result of Theorem \ref{Th1}(i).

On the other hand, for $k\geq 2$ and using again \eqref{tvbou}, \eqref{E1bou}, \eqref{E2bou} and this time \eqref{eq:E2bou}  we arrive at the bound
\begin{align*}
	&\mathbf{d_{KR}}(\xi_R^{(k)} \cap (c,\infty),\zeta \cap (c,\infty))\\
	&\quad\le	\beta_3 e^{-c}(c+\beta_1\log R) R^{-1} +2\Big\{\beta_4 e^{-c'} 
	\Big(\frac{c'+v_k(R)}{R(d-1)}\Big)^{k-1}+\beta_9 (c'+R)^2 e^{-R(d-1)} +\beta_{12}R^{-1}\Big\}.
\end{align*}
Choosing again $c'=c+\log R$ we conclude that
\begin{align*}
\mathbf{d_{KR}}(\xi_R^{(k)} \cap (c,\infty),\zeta \cap (c,\infty)) \le C_{k,d} \,{\log R \over R},
\end{align*}
where $C_{k,d}>0$ is some constant depending on $c$, $k$ and on $d$. This is the assertion of Theorem \ref{Th1}(ii) and completes the proof.
\qed

\section*{Acknowledgements}

The authors wish to thank two anonymous referees for their helpful suggestions which led to an improvement of this article. 
This work has been supported by the DFG priority program SPP 2265 \textit{Random Geometric Systems}.

 \end{document}